\documentclass[12pt,reqno]{amsart}
\usepackage{amsmath,amssymb,amsthm,mathtools}
\usepackage[hidelinks]{hyperref}
\usepackage{microtype}
\usepackage[margin=0.94in]{geometry}
\numberwithin{equation}{section}
\newcommand{\F}{\mathrm F}
\newcommand{\Fp}{\mathrm F^*}
\newcommand{\C}{\mathrm C}
\newcommand{\Op}{\mathrm O}
\newcommand{\Syl}{\operatorname{Syl}}
\newtheorem{theorem}{Theorem}[section]
\newtheorem{lemma}[theorem]{Lemma}
\newtheorem{proposition}[theorem]{Proposition}

\theoremstyle{remark}
\newtheorem{remark}[theorem]{Remark}

\title[Abelian quotients of subgroups]{Abelian quotients and orbit sizes for subgroups of linear groups}
\author{YONG YANG}
\address{Department of Mathematics, Texas State University, San Marcos, TX 78666, USA.}
\email{yang@txstate.edu}
\date{}
\keywords{finite linear group, solvable linear group, abelian quotient, orbit size, quasi-primitive action, regular orbit}
\subjclass[2020]{20C20, 20D10, 20C15}

\begin{document}
\begin{abstract}
Keller and Yang formulated two conjectures relating abelian quotients of subgroups of finite linear groups to largest orbit sizes. After counterexamples were found to the original conjectures, two revised versions were proposed. We prove both revised conjectures for every finite group $G$ satisfying $\F(G)=\Fp(G)$. We also extend a regular-orbit theorem for nilpotent subgroups to completely reducible modules of mixed characteristic under the same hypothesis.
\end{abstract}
\maketitle

\section{Introduction}
For a finite group $H$ acting on a finite set $\Omega$, set
\[
 m_H(\Omega)=\max_{\omega\in\Omega}|H:\C_H(\omega)|.
\]
For a finite $H$-module $V$ we write simply $m_H(V)$. Keller and the author proved that if $H$ is solvable and $V$ is a faithful completely reducible $H$-module, then
\[
 |H/H'|\leq m_H(V).
\]
For a subgroup $U\leq G$, however, complete reducibility of $V$ as a $G$-module need not pass to $U$ in defining characteristic. Thus the subgroup problem does not follow by simply applying the theorem to $U$.

Keller and Yang raised two conjectures concerning abelian quotients of subgroups and largest orbit sizes; see \cite[Conjectures~2.7 and~2.8]{KellerYang2016}. In the notation used here, their first conjecture asserted that if $G$ is a finite group, $V$ is a finite faithful completely reducible $G$-module, possibly of mixed characteristic, $U\leq G$, and $M=m_G(V)$, then
\[
 |U/U'|\leq M.
\]
Their second conjecture asserted the same conclusion when $V$ is a finite-dimensional faithful $G$-module over a field of characteristic $p$ and $\Op_p(G)=1$, again with $M=m_G(V)$. In \cite{Yang2022}, the author showed that both conjectures are false in general. That paper then formulated two revised conjectures, strengthening the orbit term from the largest $G$-orbit to the largest $U$-orbit. Conjecture~3.1 assumes $\F(G)=\Fp(G)$ and that $V$ is a finite faithful completely reducible $G$-module, possibly of mixed characteristic; it predicts
\[
 |U/U'|\leq m_U(V)
\]
for every $U\leq G$. Conjecture~3.2 assumes $\F(G)=\Fp(G)$, that $V$ is a finite-dimensional faithful $G$-module over a field of characteristic $p$, and that $\Op_p(G)=1$, and predicts the same inequality. Moreover, \cite[Theorem~3.3]{Yang2022} shows that Conjecture~3.1 implies Conjecture~3.2.

Our main result proves the first revised conjecture.

\begin{theorem}\label{thm:main}
Let $G$ be a finite group with $\F(G)=\Fp(G)$, let $V$ be a finite faithful completely reducible $G$-module, possibly of mixed characteristic, and let $U\leq G$. Then
\[
 |U/U'|\leq m_U(V).
\]
\end{theorem}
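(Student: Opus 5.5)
We argue by induction on $|G|+|V|$, writing $F=\F(G)$ and decomposing $V=\bigoplus_p V_p$ into its homogeneous-characteristic parts. The first step records what $\F(G)=\Fp(G)$ gives us: $\C_G(F)\le F$. Since each $V_p$ is a completely reducible $G$-module in characteristic $p$, Clifford's theorem shows $V_p$ is completely reducible for the normal subgroup $F$. Moreover $\Op_p(G)$ acts trivially on $V_p$. Hence $F=\prod_q \Op_q(G)$, and each $\Op_q(G)$ acts faithfully and \emph{coprimely} on $\bigoplus_{p\ne q}V_p$. The point is that although $V_U$ need not be completely reducible, its restriction to $F$, and therefore to $U\cap F$ and to every subgroup of $F$, is completely reducible, because the action is coprime.

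The second step reduces to the case $F\le U$, or more precisely compares $U$ with $UF$. Put $H=UF$. We would show directly that $m_U(V)\ge |U/U'|$ by choosing a vector $v$ whose stabilizer in $U$ is as small as possible relative to the abelianization. For this we pick a normal series of $V$ as a $U$-module whose semisimplification is $W$, and let $C=\C_U(W)$. Then $C$ stabilizes a series of $V$, so it is a normal subgroup of $U$, and on each $V_p$ it acts as a $p$-group. We will use $\C_G(F)\le F$ to control $C$: every element of $C$ acting unipotently on $V_p$ and commuting modulo $F$ can be pushed into $F$. Since $F$ acts coprimely, the unipotent action must then be trivial on the relevant components. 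The goal of this step is the inequality
\[
|U/U'|\le |C/(C\cap U')|\cdot|(U/C)/(U/C)'|.
\]
Here the second factor is bounded by $m_{U/C}(W)$ via the Keller--Yang theorem, applied to the faithful completely reducible $U/C$-module $W$; in the nonsolvable case we apply induction on the smaller pair instead.

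The third step lifts orbit sizes from $W$ back to $V$ and absorbs the factor coming from $C$. Because $C$ acts coprimely on the components where it is nontrivial, $V=\C_V(C)\oplus[V,C]$ as a $C$-module. We would choose $v=v_0+v_1$ with $v_0$ realizing $m_{U/C}(W)$ on the fixed part and $v_1\in[V,C]$ in a large $C$-orbit. For the $C$-orbit we use a regular-orbit-type statement for nilpotent groups acting coprimely: an abelian quotient of a nilpotent group acting completely reducibly and coprimely has an orbit of size at least $|C/C'|$.

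I expect the main obstacle to be this third step, namely making $v_0$ and $v_1$ compatible. The two choices must combine so that
\[
|U:\C_U(v)|\ge |U:C\,\C_U(v)|\cdot|C:\C_C(v)|
\]
is at least the product of the two bounds. I would first try to arrange that the stabilizer of $v_0$ in $U/C$ normalizes a suitable orbit of $C$ on $[V,C]$, using a Gaschütz/Glauberman-type counting argument over the coprime $C$-action. Failing that, I would fall back on a further Clifford reduction to $V$ homogeneous and quasi-primitive for $G$, where $F$ has a known structure (central products of extraspecial and cyclic groups). There the orbit estimates can be checked directly.
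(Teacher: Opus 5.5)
There is a genuine gap, and it sits exactly where the theorem has content. Your Steps 2--3 rest on the claim that the kernel $C=\C_U(W)$ of $U$ on the semisimplification of $V_U$ acts coprimely wherever it acts nontrivially, so that $V=\C_V(C)\oplus[V,C]$. You justify this by saying that $\C_G(F)\le F$ lets unipotently acting elements be ``pushed into $F$''. This is false.

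Take $G=\mathrm{GL}_2(2)\cong S_3$ on $V=\mathbb F_2^2$ and $U=\langle t\rangle$ with $t$ a transvection. Then $\F(G)=\Fp(G)\cong C_3$, and both composition factors of $V_U$ are trivial, so $C=U$. This $C$ acts nontrivially and unipotently, and $\C_V(C)=[V,C]$ is a line, so no such splitting exists. The condition $\C_G(F)\le F$ only says that an element outside $F$ moves $F$; it says nothing about how a $p$-element acts on the characteristic-$p$ part.

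This is not a side case. In a minimal counterexample $U$ is nilpotent with $\Op_p(U)\neq1$, and $\Op_p(U)\le C$ by Lemma~\ref{lem:normalp}. So $C\ne1$ acts non-coprimely precisely in the hard case, and the coprime regular-orbit input you planned to use is unavailable. Your fallback (reduce to homogeneous quasi-primitive and check orbits directly) is where all the work lies, and it is not carried out.

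Two further steps also fail as stated:
\begin{itemize}
\item Orbit sizes lift from quotient modules, since $\C_U(v)\le\C_U(\bar v)$, but not from a submodule-plus-quotient, let alone the full semisimplification. For $v=v'+\tilde v''$, an element of $\C_U(v'')$ fixes $v$ iff $uv'-v'=\tilde v''-u\tilde v''$, which is a cocycle condition, not $uv'=v'$.
\item In the nonsolvable case you cannot induct on $(U/C,W)$, because $\F=\Fp$ is inherited neither by subgroups nor by quotients.
\end{itemize}

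The paper avoids all of this as follows.
\begin{itemize}
\item A minimal counterexample has $U'\le\Phi(U)$. Indeed, if a maximal $L<U$ misses $U'$, then $|U/U'|\le|L/L'|\le m_L(V)\le m_U(V)$. Hence $U$ is nilpotent and $X=\F(G)U$ is solvable.
\item $\C_G(F)\le F$ is used not to control a unipotent kernel but in a Carlip-type detection lemma. For each $x\in U$ of prime order, either $x\in\Op_r(F)$ or $[x,\Op_r(F)]\neq1$ for some prime $r$. Coprime Clifford theory for $\Op_r(F)$ on a $G$-summand of characteristic $\neq r$ then gives an irreducible $X$-submodule on which $x$ acts nontrivially.
\item The sum $W$ of these submodules is a completely reducible $X$-submodule with $\C_U(W)=1$. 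It is completely reducible for $X$, not for $U$.
\item One then applies Theorem~\ref{thm:solvable} to $X/\C_X(W)$ and the image of $U$. Its proof is what actually handles the non-completely-reducible $U$: reduction to $G=U\F(G)$ with $U$ nilpotent, Thompson's ab-maximal theorem, the Manz--Wolf prime-index reduction, and Holt--Yang regular orbits with a GAP check.
\end{itemize}
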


Thus Conjecture~3.1 of \cite{Yang2022} holds. By \cite[Theorem~3.3]{Yang2022}, Conjecture~3.2 holds as well.

We first prove the solvable case. For solvable $G$ the condition $\F(G)=\Fp(G)$ is automatic.

\begin{theorem}\label{thm:solvable}
Let $G$ be a finite solvable group, let $V$ be a finite faithful completely reducible $G$-module, possibly of mixed characteristic, and let $U\leq G$. Then
\[
 |U/U'|\leq m_U(V).
\]
\end{theorem}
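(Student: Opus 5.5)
The plan is to argue by induction on $|U|+|V|$, reducing to the theorem of Keller and the author: for solvable $H$ and a faithful completely reducible $H$-module $W$, we have $|H/H'|\le m_H(W)$. The only difficulty is that $V|_U$ need not be completely reducible.

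\textbf{Setup.} Write $V=\bigoplus_p V_p$, where $V_p$ is the homogeneous-characteristic part in characteristic $p$. Each $V_p$ is a completely reducible $G$-module, and orbits of $U$ on $V$ contain products of orbits on the $V_p$. We have $m_U(V)\ge m_{U/C_U(V_p)}(V_p)\cdot m_{C_U(V_p)}\bigl(\bigoplus_{q\ne p}V_q\bigr)$, by choosing $v_p$ in a largest orbit and then a complement vector for its stabilizer. This lets us peel off one characteristic at a time. Combined with the elementary estimate $|U/U'|\le|U/NU'|\cdot|N/(N\cap U')|$ for $N\trianglelefteq U$, the problem reduces to bounding the contribution of each characteristic separately. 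If $V|_U$ is already completely reducible, the Keller--Yang theorem applied to $U/C_U(V)$ finishes.

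\textbf{Filtration in one characteristic.} Fix $p$. Take a $U$-composition series $0=X_0<X_1<\dots<X_r=V_p$, let $W=\bigoplus X_i/X_{i-1}$ be its semisimplification, and let $N$ be the kernel of $U$ on $W$. Then $N$ stabilizes a series of $V_p$, so $N/C_N(V_p)$ is a $p$-group and $N$ is normal in $U$. Moreover $U/N$ acts faithfully and completely reducibly on $W$, so the Keller--Yang theorem gives $|U/NU'|\le |U/N:(U/N)'|\le m_{U/N}(W)$. Two things remain:
\begin{enumerate}
\item[(a)] \emph{Orbit-transfer lemma.} For a $U$-module $V$ with submodule $X$, there is $w\in V$ with $|U:C_U(w)|\ge |U:C_U(x)\cap C_U(\bar v)|$ for suitable $x\in X$ and $\bar v\in V/X$. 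Iterating gives $m_U(V_p)\ge m_U(W)$ at the level of stabilizers.
\item[(b)] A bound for the $N$-part, $|N/(N\cap U')|$, by the extra orbit length coming from $N$.
\end{enumerate}

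\textbf{Approach to (a).} Count over a coset $v+X$. Every $w\in v+X$ has $C_U(w)\le C_U(\bar v)$. Averaging $|C_{C_U(\bar v)}(w)|$ over $w\in v+X$ gives the number of orbits of $C_U(\bar v)$ on $v+X$, which one compares with its orbits on $X$. The aim is to find $w$ in the coset whose stabilizer is contained in the stabilizer of some $x\in X$ chosen in a large orbit.

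\textbf{Approach to (b).} Since $G$ acts completely reducibly, $\Op_p(G/C_G(V_p))=1$. Hence the $p$-group $N/C_N(V_p)$ is not normalized by a $p$-radical of $G$ and must act nontrivially; its fixed points on the top factors give room for longer orbits. Concretely, I would show $[V_p,N]\ne V_p$ and that $N$ contributes a factor at least $|N/N'C_N(V_p)|$ to some orbit length extending a regular-type vector for $U/N$ on $W$. For this I would use that a $p$-group acting faithfully on a $p$-module has an orbit of size at least its abelianization, by a filtration argument on its own fixed-point series.

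\textbf{Main obstacle.} I expect the hardest step to be combining (a) and (b) simultaneously. We need a single vector whose $U$-stabilizer is small both modulo $N$ and inside $N$, whereas the two bounds are naturally achieved by different vectors. I would first try to choose $w$ in a coset of a largest $U/N$-orbit on $W$ and run the averaging argument of (a) with $C_N$ in place of $C_U(\bar v)$. If that fails, I would strengthen the induction hypothesis to the pair $(U,V)$ with $U$ acting on a filtered module whose factors are faithful-for-$U/N$.
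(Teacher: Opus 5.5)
Your outline is not yet a proof. The two steps that carry the weight fail as stated, and you leave open how to combine them, which you yourself flag as the main obstacle.

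In (b) you rely on the claim that a $p$-group acting faithfully on a module in characteristic $p$ has an orbit of length at least $|P/P'|$. This is false. The group $P=\{x\mapsto x+(\alpha x_2+\beta x_3)e_1 : \alpha,\beta\in\mathbb{F}_p\}\cong C_p\times C_p$ acts faithfully on $\mathbb{F}_p^3$, but every orbit lies in a coset $x+\mathbb{F}_pe_1$. Hence $m_P(\mathbb{F}_p^3)=p<p^2=|P/P'|$. Taking $U=P$, this is exactly your situation with $N=U$ and $W$ trivial. So no bound on $|N:N\cap U'|$ can come from the action of $U$ on $V$ alone. The hypothesis that $U$ lies in a solvable $G$ acting completely reducibly must be used in an essential way. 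You invoke it only through $\Op_p(G/\C_G(V_p))=1$, and you never turn that into an orbit estimate.

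The one-step lemma in (a) that you intend to iterate, namely $m_U(V)\ge m_U(X\oplus V/X)$, is also false in the non-coprime case. The group $\C_U(\bar v)$ acts on $v+X$ affinely, via a cocycle that need not be a coboundary. Its fixed-point sets are then cosets of the subspaces $\C_X(b)$, and such cosets can cover $X$ even when the linear action has a regular orbit. Averaging over $v+X$ only counts orbits and cannot rule this out. For example, let $U=\langle a,b,s\rangle\cong C_2^3$ act on $\mathbb{F}_2^5$ with basis $e_1,e_2,e_3,e,f$:
\begin{itemize}
\item $a$ sends $e_1\mapsto e_1+e_2$ and $e\mapsto e+e_2$;
\item $b$ sends $e_1\mapsto e_1+e_3$;
\item $s$ sends $e\mapsto e+f$;
\item all other basis vectors are fixed.
\end{itemize}
With $X=\langle e_1,e_2,e_3\rangle$, the pair $(e_1,\bar e)$ has trivial stabilizer, so $m_U(X\oplus V/X)=8$. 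Yet every vector of $V$ is fixed by $s$ (if it lies in $X+\mathbb{F}_2f$), or by $a$ or $b$ (according to its $e_1$-coordinate otherwise). So $m_U(V)\le 4$. Here $W$ is trivial, so this does not refute the iterated conclusion $m_U(V_p)\ge m_U(W)$, but it shows your route to that conclusion does not work.

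The missing idea is to stop semisimplifying $V$ as a $U$-module, since that discards the faithful $p$-part $N$, and to use the ambient group instead. After reducing, as in your setup, to $V$ irreducible in characteristic $p$, the paper enlarges $U$ to $U\F(G)$. This group has trivial $p$-core because $\C_G(\F(G))\le\F(G)$. The paper then passes to the quotient $V/\Phi(V\rtimes U\F(G))$, which is a faithful completely reducible $U\F(G)$-module. Since $\C_U(v)\le\C_U(\bar v)$ for any lift $v$ of $\bar v$, orbit lengths can only grow when lifting, and minimality then forces $G=U\F(G)$.

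From there $U$ is nilpotent with ab-maximal Sylow subgroups, hence of class at most two by Thompson's theorem. The contradiction then comes from the structure of $G$:
\begin{itemize}
\item In the imprimitive case it uses the Manz--Wolf normal subgroup of prime index and regular orbits of the abelian kernels $K_i$.
\item In the primitive case it uses Berger's theorem, a lemma on metacyclic $p$-groups, the Holt--Yang regular-orbit theorem, and a GAP check of the exceptional pairs.
\end{itemize}
Some such global input about $G$ is unavoidable, as the $C_p\times C_p$ example shows.
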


Taking $U=G$ in Theorem~\ref{thm:solvable} recovers the theorem of Keller and Yang \cite{KellerYang2016}. Thus Theorem~\ref{thm:solvable} extends their result from the acting solvable linear group itself to arbitrary subgroups. Theorem~\ref{thm:main} will follow from Theorem~\ref{thm:solvable} by the reduction in Section~5.

We prove Theorem~\ref{thm:solvable} by a minimal-counterexample argument, following the induction used in \cite{KellerYang2016}. The additional difficulty is that complete reducibility for $G$ need not imply complete reducibility for $U$. We therefore apply induction only to solvable quotient groups that act faithfully and completely reducibly on the modules under consideration.

To deduce Theorem~\ref{thm:main} from Theorem~\ref{thm:solvable}, we use an argument of Carlip for nilpotent subgroups \cite{Carlip1994}; compare \cite[Step~2 of the proof of Theorem~3.1]{Yang2011}. We prove the mixed-characteristic version needed here in Section~5.

In the reducible-module induction in \cite{Yang2022}, one passes to quotient actions attached to proper submodules. An inductive use of the conjectural hypothesis would require the resulting quotient group to satisfy $\F=\Fp$, a condition that is not preserved by arbitrary quotients. The proof here avoids that induction.

\section{A minimal counterexample}
Suppose that Theorem~\ref{thm:solvable} is false. Choose a counterexample with $|G|$ minimal, and subject to this, with $|U|$ minimal. Among these choices, take $V$ of minimal order. We begin with the induction needed below.

Suppose that $V=V_1\oplus V_2$ with nonzero $G$-submodules $V_1,V_2$, and put $N=\C_U(V_1)$. Let $m_1$ be the largest orbit size of $U$ on $V_1$, and let $m_2$ be the largest orbit size of $N$ on $V_2$. Since $N$ centralizes $V_1$ and $V$ is faithful, $N\cap\C_G(V_2)=1$, so $N$ acts faithfully on $V_2$.

The image of $G$ on $V_1$ is the solvable group $G/\C_G(V_1)$, and the image of $U$ is $U/N$. Likewise, the image of $G$ on $V_2$ is $G/\C_G(V_2)$, and the image of $N$ is isomorphic to $N$. Since $V_1$ and $V_2$ are direct summands of the completely reducible $G$-module $V$, both are completely reducible for the corresponding quotient groups. If either kernel is nontrivial, the corresponding quotient group is smaller than $G$; if the kernel is trivial, the group remains $G$ but the module has smaller order. Hence the minimal choice of $(G,U,V)$ gives
\begin{align*}
 m_1&\geq \left|U/N:(U/N)'\right|,\\
 m_2&\geq |N/N'|.
\end{align*}
Choose $v_i\in V_i$ with orbit size $m_i$, and put $v=v_1+v_2$. Then
\begin{align*}
 |v^U|
 &=|U:\C_U(v_1)\cap\C_U(v_2)|\\
 &=|U:\C_U(v_1)|\,
   |\C_U(v_1):\C_U(v_1)\cap\C_U(v_2)|\\
 &\geq m_1\,|N:N\cap\C_U(v_2)|\\
 &=m_1m_2,
\end{align*}
Since $N\lhd U$, we have
\[
 |U/U'|=|U:NU'|\,|N:N\cap U'|.
\]
Now $(U/N)'=U'N/N$, and $N'\leq N\cap U'$. Hence
\[
 |U/U'|\leq |U/N:(U/N)'|\,|N/N'|.
\]
Thus $m_U(V)\geq |U/U'|$, a contradiction. Therefore $V$ is irreducible, over a finite field of characteristic $p$.

Let $F=\F(G)$. Since $G$ is solvable, faithful and irreducible on $V$, one has $\Op_p(G)=1$, and hence $F$ is a $p'$-group. Also $\C_G(F)\leq F$. Thus $\Op_p(UF)=1$: if $P=\Op_p(UF)$, then $[P,F]\leq P\cap F=1$, so $P\leq\C_G(F)\leq F$, and therefore $P=1$.

Set $X=(UF)\ltimes V$. Write $\F(X)=P\times Q$, where $P$ is a $p$-group and $Q$ is a $p'$-group. Then $PV/V$ is a normal $p$-subgroup of $X/V\cong UF$, so $P\leq V$. Moreover $[Q,V]\leq Q\cap V=1$, and faithfulness of the $UF$-action gives $Q=1$. Hence
\begin{equation}\label{eq:FX}
 \F(X)=V.
\end{equation}
Thus $\Phi(X)\leq V$, and $\Phi(X)<V$, since $X=(UF)V=(UF)\Phi(X)$ would otherwise imply $X=UF$.

Put
\[
 \overline X=X/\Phi(X),\qquad
 \overline V=V/\Phi(X),\qquad
 \overline{UF}=UF\Phi(X)/\Phi(X).
\]
Because $X$ is solvable, $\Phi(\overline X)=1$ and \eqref{eq:FX} imply that $\overline V=\F(\overline X)$ is a direct product of minimal normal subgroups of $\overline X$. Hence $\overline V$ is a completely reducible $\overline{UF}$-module. We check faithfulness. Let $C=\C_{\overline{UF}}(\overline V)$. If $A$ is a $p'$-subgroup of $C$, then $[V,A]\leq\Phi(X)$. By coprime action,
\[
 V=[V,A]\C_V(A)=\Phi(X)\C_V(A).
\]
Hence $X=\Phi(X)(UF)\C_V(A)$, and the defining property of the Frattini subgroup gives $V=\C_V(A)$. Faithfulness on $V$ yields $A=1$. Thus $C$ is a $p$-group. Since $\Op_p(UF)=1$, we obtain $C=1$.

If $\Phi(X)>1$, then $|\overline V|<|V|$. Moreover $\overline{UF}\cong UF$ because $\Phi(X)\leq V$ and $UF\cap V=1$. If $UF<G$, the group has smaller order; if $UF=G$, the group $G$ and subgroup $U$ are unchanged while the module has smaller order. In either case minimality gives the desired inequality for the image of $U$ on $\overline V$. For a lift $v\in V$ of a vector $\bar v\in\overline V$, one has $\C_U(v)\leq\C_U(\bar v)$, so the same inequality holds on $V$, a contradiction. Hence $\Phi(X)=1$. Consequently $V$ is a faithful completely reducible $UF$-module. If $UF<G$, minimality in $|G|$ gives the theorem for $(UF,V,U)$, a contradiction. Therefore
\begin{equation}\label{eq:basicG}
 G=U\F(G).
\end{equation}

We next show that $U$ is nilpotent. If a maximal subgroup $L<U$ does not contain $U'$, then $U=LU'$ and
\[
 |U/U'|=|L:L\cap U'|\leq |L/L'|.
\]
Minimality in $|U|$ gives $|L/L'|\leq m_L(V)\leq m_U(V)$, a contradiction. Thus $U'\leq\Phi(U)$. Consequently every maximal subgroup of $U$ contains $U'$ and is therefore normal. A finite group in which every maximal subgroup is normal is nilpotent; hence $U$ is nilpotent.

Finally, $\Op_p(U)\neq1$. Otherwise nilpotence gives $p\nmid|U|$. By Maschke's theorem, $V$ is a faithful completely reducible $U$-module, and the Keller--Yang theorem \cite{KellerYang2016}, applied to $U$, gives $|U/U'|\leq m_U(V)$, again a contradiction.

We have proved that a minimal counterexample satisfies
\begin{equation}\label{eq:basic}
 G=U\F(G),\qquad \Op_p(G)=1,
 \qquad U\text{ is nilpotent},\qquad \Op_p(U)\neq1.
\end{equation}

Put
\[
 M=m_U(V).
\]
Since $(G,V,U)$ is a counterexample,
\begin{equation}\label{eq:bad}
 M<|U/U'|.
\end{equation}

\begin{lemma}\label{lem:strict}
If $H<U$, then
\[
 |H/H'|<|U/U'|.
\]
\end{lemma}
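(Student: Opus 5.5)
We argue by contradiction, using the minimality of $|U|$ in the choice of the counterexample. Suppose $H<U$ and $|H/H'|\geq |U/U'|$. The triple $(G,V,H)$ has the same group $G$ and the same faithful completely reducible module $V$, but a strictly smaller subgroup. By the choice of $(G,U,V)$, with $|G|$ minimal and then $|U|$ minimal, $(G,V,H)$ is therefore not a counterexample to Theorem~\ref{thm:solvable}. Hence
\[
 |H/H'|\leq m_H(V).
\]

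Next we compare orbit sizes. For every $v\in V$ we have $\C_H(v)=H\cap\C_U(v)$. Therefore
\[
 |H:\C_H(v)|=|H\C_U(v)|/|\C_U(v)|\leq |U:\C_U(v)|.
\]
Taking the maximum over $v$ gives $m_H(V)\leq m_U(V)=M$. Combining this with the assumption and with \eqref{eq:bad}, we get
\[
 |U/U'|\leq |H/H'|\leq m_H(V)\leq M<|U/U'|,
\]
which is a contradiction. Hence $|H/H'|<|U/U'|$.

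There is no real obstacle. The only point to check is that the induction applies to $H$ directly. The hypotheses of Theorem~\ref{thm:solvable} concern only $G$ and $V$: $G$ is solvable and $V$ is faithful and completely reducible for $G$. They impose nothing on the subgroup, so no complete reducibility of $V$ as an $H$-module is needed.
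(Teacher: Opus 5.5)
Your proposal is correct and follows the paper's argument: minimality of $|U|$ (with $G$ and $V$ unchanged) gives $|H/H'|\leq m_H(V)$, and then $m_H(V)\leq m_U(V)=M<|U/U'|$. Your explicit justification of $m_H(V)\leq m_U(V)$ via $\C_H(v)=H\cap\C_U(v)$ is valid, and the contradiction framing is harmless, since the same chain of inequalities proves the strict inequality directly.
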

\begin{proof}
By minimality in $|U|$, Theorem~\ref{thm:solvable} holds for $H$ in the same group $G$. Hence
\[
 |H/H'|\leq m_H(V)\leq m_U(V)=M<|U/U'|.
\]
\end{proof}

The strict inequality in Lemma~\ref{lem:strict} implies the class-two property used below.

\begin{lemma}\label{lem:class2}
Let $P\in\Syl(U)$. Then
\begin{equation}\label{eq:abmax}
 |K/K'|<|P/P'|\qquad(K<P).
\end{equation}
Consequently
\[
 P'\leq Z(P)
\]
for every Sylow subgroup $P$ of $U$, and therefore
\begin{equation}\label{eq:class2}
 U'\leq Z(U).
\end{equation}
\end{lemma}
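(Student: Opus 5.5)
We derive \eqref{eq:abmax} from Lemma~\ref{lem:strict} by using that $U$ is nilpotent. Write $U=P\times R$, where $P\in\Syl(U)$ and $R$ is the product of the other Sylow subgroups of $U$. Then $U'=P'\times R'$, so
\[
 |U/U'|=|P/P'|\,|R/R'|.
\]
If $K<P$, then $H=K\times R$ is a proper subgroup of $U$, and $H'=K'\times R'$. Lemma~\ref{lem:strict} gives
\[
 |K/K'|\,|R/R'|=|H/H'|<|U/U'|=|P/P'|\,|R/R'|,
\]
and cancelling $|R/R'|$ yields \eqref{eq:abmax}.

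Next we show $P'\leq Z(P)$ by contradiction. Suppose $P'\not\leq Z(P)$. Then some $x\in P$ fails to centralize $P'$. Since $\Phi(P)$ is generated by $P'$ together with the $p$-th powers, and $P$ is generated by elements outside $\Phi(P)$, I expect to find such an $x$ with $x\notin\Phi(P)$. If $x\in\Phi(P)$, then $\C_P(P')$ is a proper subgroup of $P$, so it lies in some maximal subgroup; a generator of $P$ outside that maximal subgroup is then outside $\Phi(P)$ and does not centralize $P'$. Choose a maximal subgroup $K<P$ with $x\notin K$. Then $K\lhd P$, $|P:K|=p$, and $P=K\langle x\rangle$.

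We then compare $|K/K'|$ with $|P/P'|$. Because $P/K$ is cyclic, $P'=[K,P]=[K,K][K,x]$ modulo the usual commutator identities. The standard formula gives
\[
 |P/P'|=p\,|K:P'|
 \quad\text{and}\quad
 |K/K'|=|K:P'|\,|P':K'|.
\]
So \eqref{eq:abmax} forces $|P':K'|<p$, that is, $K'=P'$. This holds for every maximal subgroup $K$. Hence $P'=K'\leq K$ for each maximal $K$, and $[K,x]\leq K'$ whenever $x\notin K$.

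The key step is to derive a contradiction from $P'=K'$ for all maximal $K$. We use $K'=P'$ to show that $P'/[P',P]$ is generated by commutators $[a,b]$ with $a,b$ both in $K$, for every maximal $K$. Pick two maximal subgroups $K_1,K_2$ with $K_1\cap K_2$ of index $p^2$. Commutators involving a generator outside $K_1$ become redundant modulo $[P',P]$, so $P'$ is generated modulo $[P',P]$ by commutators from $K_1\cap K_2$ together with a cross term. When $d(P)=2$, $K_1\cap K_2\leq\Phi(P)$, and this gives $P'=[P',P]$, hence $P'=1$ by nilpotence, a contradiction. The general case should reduce by induction on the number of generators.

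If this direct route fails, the fallback is a cleaner counting argument. Take $K$ maximal containing $\C_P(P')$'s complement-type structure, and compare directly $|P:P'|=p|K:P'|$ with $|K:K'|\geq|K:P'|$, arguing that $K'<P'$ whenever $x$ acts nontrivially on $P'$. This is the step I expect to be the main obstacle; I am not sure the $K'=P'$ condition closes without extra input.

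Finally, since $U$ is the direct product of its Sylow subgroups, $U'=\prod P'\leq\prod Z(P)=Z(U)$, which is \eqref{eq:class2}.
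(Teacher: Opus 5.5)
Your derivation of \eqref{eq:abmax} from Lemma~\ref{lem:strict} via $U=P\times R$, and your final step to $U'\leq Z(U)$, are the same as in the paper. The gap is the middle step, $P'\leq Z(P)$. The paper does not prove this by hand: it quotes Thompson's theorem that a finite $p$-group satisfying \eqref{eq:abmax} has class at most two. Your substitute correctly shows that, for a maximal subgroup $K$, the inequality $|K/K'|<|P/P'|$ is equivalent to $K'=P'$. You then try to reach a contradiction from ``$K'=P'$ for every maximal $K$'' alone, but that condition does not force class two.

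Take $p\geq5$, $q=p^f$ with $f\geq3$, and $P=\mathrm{UT}_4(\mathbb{F}_q)$. This group has exponent $p$ and class $3$, and $\Phi(P)=P'$. Moreover $P'/\gamma_3(P)\cong\mathbb{F}_q^2$ via the $(1,3)$ and $(2,4)$ entries, which for $[x,y]$ are, up to sign, $x_{12}y_{23}-x_{23}y_{12}$ and $x_{23}y_{34}-x_{34}y_{23}$. A maximal subgroup $K$ corresponds to an $\mathbb{F}_p$-hyperplane $H$ of $P/P'\cong\mathbb{F}_q^3$. This $H$ meets each coordinate line in an $\mathbb{F}_p$-subspace $S_i$ of dimension at least $f-1\geq2$.

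The products $S_1S_2$ span $\mathbb{F}_q$. This is clear if $\dim S_2=f$. Otherwise a proper span $T$ would equal $sS_2$ for every $0\neq s\in S_1$. Then $T$ would be stable under $s'/s\notin\mathbb{F}_p$ for independent $s,s'\in S_1$, making $T$ a vector space over a subfield $\mathbb{F}_{p^d}$ with $d\geq2$ and $d\mid f$. That is impossible since $\dim T=f-1$. The same holds for $S_2S_3$, so $K'\gamma_3(P)=P'$. Since $K'\lhd P$ and $\gamma_3(P)=[P',P]$, nilpotence gives $K'=P'$ for every maximal $K$, although $P'\not\leq Z(P)$.

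Thus no induction on $d(P)$, and no version of your fallback, can extract $P'\leq Z(P)$ from maximal subgroups alone. Any proof must use non-maximal proper subgroups. In this example, the abelian subgroup of matrices with identity diagonal $2\times2$ blocks and arbitrary upper-right $2\times2$ block has order $q^4>|P/P'|=q^3$, and it is this subgroup that violates \eqref{eq:abmax}. That is exactly what Thompson's theorem supplies, so you need to invoke it here, as the paper does.

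A smaller point: in your $d(P)=2$ sketch the conclusion should be $P'=[P',P](P')^p$ rather than $P'=[P',P]$, since commutators of $p$-th powers need not lie in $[P',P]$. This still gives $P'=1$.
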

\begin{proof}
Write $U=P\times R$, where $R$ is the product of the Sylow subgroups of $U$ different from $P$. If $K<P$, then $KR<U$. Lemma~\ref{lem:strict} gives
\[
 |K/K'|\,|R/R'|<|P/P'|\,|R/R'|,
\]
and cancellation proves \eqref{eq:abmax}. Thompson proved that a finite $p$-group $P$ satisfying \eqref{eq:abmax} has $P'\leq Z(P)$; see \cite{Thompson1969}. Applying this result gives $P'\leq Z(P)$. Since $U$ is the direct product of its Sylow subgroups, \eqref{eq:class2} follows.
\end{proof}

Following standard terminology, a finite group $P$ is called \emph{ab-maximal} if $|P/P'|>|K/K'|$ for every proper subgroup $K<P$. Thus \eqref{eq:abmax} says that every Sylow subgroup of $U$ is ab-maximal.

\section{The non-quasi-primitive case}
Assume that the irreducible action of $G$ on $V$ is not quasi-primitive. We use the prime-index argument from the proof of \cite[Theorem~2.3]{KellerYang2016}. It follows from \cite[Corollary~0.3]{ManzWolf} and applies to an irreducible solvable linear group whose quotient by the Fitting subgroup is nilpotent. In our situation \eqref{eq:basic} gives
\[
 G/\F(G)\cong U/(U\cap\F(G)),
\]
which is nilpotent. Thus the hypotheses of that reduction are satisfied. Consequently there are a prime $r$ and a normal subgroup $D\lhd G$ with $|G:D|=r$ such that
\begin{equation}\label{eq:decomp}
 V_D=V_1\oplus\cdots\oplus V_r,
\end{equation}
where the $V_i$ are homogeneous $D$-components and $G/D$ permutes them cyclically. Each $V_i$ is therefore a direct sum of irreducible $D$-modules. Consequently
\[
 D/\C_D(V_i)
\]
is a solvable group of order smaller than $|G|$ acting faithfully and completely reducibly on $V_i$.

Put
\[
 T=U\cap D.
\]
If $U\leq D$, then $V_D$ is completely reducible by \eqref{eq:decomp}; after factoring out $\C_D(V)$, the resulting solvable linear group has order smaller than $|G|$ and contains the image of $U$. Minimality in $|G|$ then gives the desired inequality, a contradiction. Hence $UD=G$, and therefore
\begin{equation}\label{eq:UT}
 |U:T|=r.
\end{equation}
Choose $x\in U\setminus T$ in the Sylow $r$-subgroup of $U$. Since $U$ is nilpotent, $x$ centralizes every Sylow $q$-subgroup of $T$ for $q\neq r$.

\begin{lemma}\label{lem:T}
We have
\begin{equation}\label{eq:Tfacts}
 T\cap U'=T',\qquad [T,x]\leq T',
 \qquad |U/U'|=r|T/T'|.
\end{equation}
\end{lemma}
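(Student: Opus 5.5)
The plan is to exploit the strict inequality of Lemma~\ref{lem:strict} for the proper subgroup $T<U$, together with the elementary index computation $|U/U'| = |U:TU'|\,|T:T\cap U'|$.

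\emph{Step 1.} Since $T\lhd U$ has prime index $r$ by \eqref{eq:UT}, the quotient $U/T$ is cyclic. Hence $U'\leq T$, so $TU'=T$, and
\[
 |U/U'| = |U:T|\,|T:U'| = r\,|T:U'|.
\]
Because $T'\leq U'$, this gives
\[
 |U/U'| = r\,|T/T'|\,/\,|U':T'|.
\]

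\emph{Step 2.} We must show $U'=T'$, which is the same as $T\cap U'=T'$ since $U'\leq T$. Suppose instead that $T'<U'$. Then $|U:U'|$ divides $r|T/T'|/|U':T'|$, and so $|U/U'|\leq |T/T'|$ as soon as $|U':T'|\geq r$. But Lemma~\ref{lem:strict} applied to $T<U$ gives $|T/T'|<|U/U'|$, which is a contradiction.

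Step~2 therefore reduces to showing that $|U':T'|$ is divisible by $r$ whenever it exceeds $1$. For this we use the nilpotence of $U$. Write $U=U_r\times U_{r'}$. Since $|U:T|=r$, we have $T=T_r\times U_{r'}$ with $T_r<U_r$ of index $r$. Then
\[
 U'=U_r'\times U_{r'}',\qquad T'=T_r'\times U_{r'}',
\]
so $U'/T'\cong U_r'/T_r'$ is an $r$-group. If this quotient is nontrivial, its order is at least $r$. This yields $|U/U'|\leq|T/T'|$ and contradicts Lemma~\ref{lem:strict}.

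Equivalently, one can apply \eqref{eq:abmax} of Lemma~\ref{lem:class2} directly to $K=T_r<P=U_r$. That gives $|T_r/T_r'|<|U_r/U_r'|=r|T_r:U_r'|$, which forces $|U_r':T_r'|<r$, hence $U_r'=T_r'$.

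\emph{Step 3.} With $U'=T'$ established, the equality $|U/U'|=r|T/T'|$ follows from Step~1. Moreover, $[T,x]\leq [U,U]=U'=T'$. This proves all three assertions in \eqref{eq:Tfacts}.

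The only point needing care is the divisibility claim in Step~2, namely that $U'/T'$ is an $r$-group. The Sylow decomposition handles this cleanly, since $x$ lies in the Sylow $r$-subgroup and centralizes the $r'$-part. I therefore expect no serious obstacle.
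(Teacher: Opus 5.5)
Your proof is correct and follows the paper's argument. Both compute $|U/U'|=r\,|T:T\cap U'|$, show that $(T\cap U')/T'$ is an $r$-group using nilpotence of $U$, and then apply Lemma~\ref{lem:strict} to $T<U$ to force $|T\cap U':T'|<r$, hence $T\cap U'=T'$. The only cosmetic difference is that you get the $r$-group property from the Sylow decomposition $U'/T'\cong U_r'/T_r'$, whereas the paper uses the fact that $(T\cap U')/T'$ is generated by the commutators $[T,x]$.
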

\begin{proof}
Since $U/T$ is cyclic,
\[
 |U/U'|=r|T:T\cap U'|.
\]
Also
\[
 |T/T'|=|T:T\cap U'|\,|T\cap U':T'|.
\]
Modulo $T'$, the subgroup $T\cap U'$ is generated by the commutators $[T,x]$. Because $x$ centralizes all Sylow $q$-subgroups of $T$ with $q\neq r$, the quotient $(T\cap U')/T'$ is an $r$-group. Since $T<U$, Lemma~\ref{lem:strict} yields
\[
 |T/T'|<|U/U'|=r|T:T\cap U'|.
\]
Hence $|T\cap U':T'|<r$. It follows that $T\cap U'=T'$, and the other two assertions follow immediately.
\end{proof}

For $1\leq i\leq r$, set
\[
 K_i=\C_T(V_i).
\]
Number the components so that $K_i^x=K_{i+1}$, with indices taken modulo $r$.

\begin{lemma}\label{lem:Ki}
For every $i$,
\begin{equation}\label{eq:Ki}
 K_iT'=K_1T',\qquad K_i\leq Z(T),
\end{equation}
and $K_i$ is an $r$-group.
\end{lemma}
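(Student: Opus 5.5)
The plan is to exploit three facts already in hand: each $K_i$ is normal in $T$; conjugation by $x$ induces the identity on $T/T'$; and $T'$ is central. Together these force all the $K_i$ to agree modulo $T'$. Faithfulness of $V$ then kills whatever part of $K_i$ could be non-central or of order prime to $r$.

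First, normality. Since $T\leq D$ and $D$ stabilizes each homogeneous component $V_i$ in \eqref{eq:decomp}, each $K_i=\C_T(V_i)$ is normal in $T$. Also $x\in U$ normalizes $T=U\cap D$. Because $x$ permutes the $V_i$ cyclically, the numbering gives $K_i^x=K_{i+1}$. Moreover $\bigcap_i K_i=\C_T(V)=1$ by faithfulness.

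For the first equality in \eqref{eq:Ki}, take $k\in K_i$. Then $k^x=k[k,x]$, and $[k,x]\in[T,x]\leq T'$ by Lemma~\ref{lem:T}. Hence $K_{i+1}T'=K_i^xT'=K_iT'$. Since $x$ normalizes $T'$, the first of these equalities holds as well. Induction on $i$ gives $K_iT'=K_1T'$ for every $i$.

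For centrality, note $T'\leq U'\leq Z(U)$ by \eqref{eq:class2}, so $T'$ is central in $T$. Let $k\in K_1$, $t\in T$ and fix any $j$. By the first step we may write $k=k_js$ with $k_j\in K_j$ and $s\in T'$. Centrality of $s$ gives $[k,t]=[k_j,t]$, and this lies in $K_j$ because $K_j\lhd T$. Thus $[k,t]\in\bigcap_jK_j=1$, so $K_1\leq Z(T)$. The same argument, started from any $K_i$ in place of $K_1$, gives $K_i\leq Z(T)$ for every $i$.

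For the $r$-group claim, let $q\neq r$ and let $Q_i$ be the Sylow $q$-subgroup of the nilpotent group $K_i$. Then $Q_i$ is characteristic in $K_i$, so $Q_i^x$ is the Sylow $q$-subgroup of $K_i^x=K_{i+1}$, namely $Q_{i+1}$. On the other hand, $x$ centralizes every Sylow $q$-subgroup of $T$ for $q\neq r$, so $Q_i^x=Q_i$. Hence $Q_1=Q_2=\cdots=Q_r$. This common subgroup lies in every $K_j$ and so centralizes $V$; faithfulness gives $Q_i=1$. Therefore each $K_i$ is an $r$-group.

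The only delicate point is the centrality step. It works only because $T'$ is central, which comes from the class-two property \eqref{eq:class2}; without it, passing from $k$ to $k_j$ would not preserve the commutator with $t$.
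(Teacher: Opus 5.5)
Your proof is correct and follows the paper's argument essentially step for step. You use Lemma~\ref{lem:T} to make $K_iT'$ independent of $i$, then centrality of $T'$ and normality of each $K_i$ to get $[K_i,T]\leq\bigcap_jK_j=1$, and finally the fact that $x$ centralizes the $q$-parts of $K_i$ ($q\neq r$) to place them in $\bigcap_jK_j=1$; the only difference is that you argue elementwise where the paper writes $[K_iT',T]=[K_i,T]\leq K_i$.
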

\begin{proof}
By Lemma~\ref{lem:T}, conjugation by $x$ is trivial on $T/T'$, and hence $K_iT'$ is independent of $i$. Write this common subgroup as $L$. By Lemma~\ref{lem:class2}, $T'\leq Z(T)$. Since $K_i\lhd T$,
\[
 [L,T]=[K_i,T]\leq K_i
\]
for every $i$. Thus
\[
 [L,T]\leq\bigcap_{i=1}^rK_i=\C_T(V)=1,
\]
where the last equality follows from the faithfulness of the original $G$-action on $V$.
Hence $L\leq Z(T)$, proving the second assertion in \eqref{eq:Ki}.

Now let $q\neq r$. The $q$-part of $K_i$ lies in the unique Sylow $q$-subgroup $Q$ of the nilpotent group $U$. Since $x$ centralizes $Q$, that $q$-part is fixed by conjugation by $x$; but conjugation by $x$ sends $K_i$ successively through all the $K_j$. Hence the $q$-part lies in $\bigcap_jK_j=1$. Therefore $K_i$ is an $r$-group.
\end{proof}

Fix $K=K_1$. The image of $T$ in
\[
 D/\C_D(V_1)
\]
is $T/K$. Since this quotient has order smaller than $|G|$, minimality gives a vector $v\in V_1$ whose $(T/K)$-orbit has size at least
\[
 |(T/K)/(T/K)'|=|T:KT'|.
\]
Equivalently,
\begin{equation}\label{eq:v}
 |T:\C_T(v)|\geq |T:KT'|.
\end{equation}
On the other hand, $K$ acts faithfully on
\[
 W_1=V_2\oplus\cdots\oplus V_r,
\]
because
\[
 \C_K(W_1)=K\cap\bigcap_{j=2}^rK_j=\bigcap_{j=1}^rK_j=1.
\]
The module $W_1$ is completely reducible as a $D$-module, being a direct sum of the homogeneous components $V_2,\ldots,V_r$. Hence the faithful quotient $D/\C_D(W_1)$ has no nontrivial normal $p$-subgroup: such a subgroup would act trivially on every irreducible constituent of $W_1$ by the standard fact that a normal $p$-subgroup acts trivially on every irreducible module in characteristic $p$. The abelian regular-orbit theorem \cite[Theorem~4.3]{Yang2011} therefore applies to the image of $K$ inside $D/\C_D(W_1)$. Since $K$ acts faithfully and is abelian, it gives
\[
 |K|\leq m_K(W_1)\leq |K|.
\]
Thus $K$ has a regular orbit on $W_1$. Choose $w\in W_1$ with $\C_K(w)=1$.

Since $K\leq\C_T(v)$,
\begin{align}
 |(v+w)^T|
 &=|T:\C_T(v+w)|\notag\\
 &\geq |T:\C_T(v)|\,
    |K:K\cap\C_T(w)|\notag\\
 &\geq |T:KT'|\,|K|\notag\\
 &=|T/T'|\,|K\cap T'|.
 \label{eq:vw}
\end{align}
If $K\cap T'\neq1$, then $K$ being an $r$-group gives $|K\cap T'|\geq r$, and \eqref{eq:vw}, together with Lemma~\ref{lem:T}, contradicts \eqref{eq:bad}. Therefore
\begin{equation}\label{eq:Kcap}
 K\cap T'=1.
\end{equation}

\begin{lemma}\label{lem:intersections}
If $i\neq j$, then
\[
 K_i\cap K_j=1.
\]
\end{lemma}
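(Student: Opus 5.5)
The plan is to argue by contradiction. Suppose $A=K_i\cap K_j\neq1$ for some $i\neq j$. Conjugation by powers of $x$ permutes the $K_l$ cyclically, so we may take $i=1$. If $r=2$ then $A=K_1\cap K_2=\C_T(V)=1$, so assume $r\geq3$ and $2\le j\le r$.

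\emph{Structural facts.} From Lemma~\ref{lem:Ki}, $L=K_lT'$ does not depend on $l$ and $L\le Z(T)$. Since $T'$ is normalized by $x$ and \eqref{eq:Kcap} holds for $K_1$, we get $K_l\cap T'=1$ for every $l$. Hence every $K_l$ is a complement to $T'$ in the abelian group $L$, and $|K_l|=|L:T'|$. Also $A$ is a nontrivial $r$-group. It centralizes $V_1\oplus V_j$ and acts faithfully on $W=\bigoplus_{l\ne1,j}V_l$.

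\emph{Main idea.} The estimate \eqref{eq:vw} only produces $T$-orbits of length about $|T/T'|$. By Lemma~\ref{lem:T}, $|U/U'|=r|T/T'|$, so we must recover the missing factor $r$ from $U$ itself. Let $y=v+w$ with $v\in V_1$ chosen as in \eqref{eq:v} and $w\in W_1$ chosen with $\C_{K_1}(w)=1$; then $|y^T|\ge|T/T'|$.
\begin{itemize}
\item If $\C_U(y)\le T$, then $|y^U|=r|y^T|\ge r|T/T'|=|U/U'|$, which contradicts \eqref{eq:bad}.
\item So some element $u=x^at$ with $r\nmid a$ and $t\in T$ fixes $y$. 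Such an element permutes $V_1,\dots,V_r$ as a nontrivial power of an $r$-cycle, hence transitively. Therefore the support $\{l: y_l\neq0\}$ of $y$ is invariant under a transitive shift, so it is either empty or all of $\{1,\dots,r\}$.
\end{itemize}
The contradiction will come from choosing the data so that the support of $y$ is proper and nonempty, while keeping $|y^T|\ge|T/T'|$.

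\emph{Using $A\ne1$ to make a zero component.} This is where the hypothesis enters. Since $A$ acts trivially on $V_j$, the part of $K_1$ that really needs the component of $w$ on $V_j$ is cut down. The plan is to rerun the computation leading to \eqref{eq:vw} in the following form.
\begin{enumerate}
\item Take $v\in V_1$ with $|T:\C_T(v)|\ge|T:L|$, from minimality applied to $T/K_1$ on $V_1$.
\item Write $K_1=A\times B$. This is possible because $K_1$ is abelian of $r$-power order, after passing to suitable subgroups if needed.
\item Choose $w$ with zero component on $V_j$, regular for $A$ on $W$. The abelian regular-orbit theorem \cite[Theorem~4.3]{Yang2011} applies here, since the image of $D$ on $W$ has trivial $\Op_p$, exactly as in the argument for $W_1$.
\item Use the $V_1$-component, together with the remaining components of $W$, to handle $B$.
\end{enumerate}
To make the last step work, one can alternatively apply the minimal-counterexample hypothesis to $T/A$ acting on $V_1\oplus\bigoplus_{l\ne 1,j}V_l$ modulo its kernel, combined with a regular $A$-orbit. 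The goal is a vector $y$ with $y_j=0$, $y_1\neq0$, and $|y^T|\ge|T:AT'|\cdot|A|=|T/T'|$, where we use $A\cap T'=1$. The support dichotomy above then forces $\C_U(y)\le T$, and we get the contradiction $|y^U|\ge|U/U'|$.

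\emph{Main obstacle.} The hard step is the simultaneous choice in the previous paragraph. We need the $T$-orbit bound $|T:AT'||A|$ while forcing the $V_j$-component to vanish. The subgroup we apply induction to is $T/\C_T(V_1\oplus\bigoplus_{l\neq1,j}V_l)$, and its kernel $\bigcap_{l\ne j}K_l$ may differ from $A$. Reconciling these requires the structure ``all $K_l$ are complements to $T'$ in the central subgroup $L$'', and in particular $|A|\cdot|L:AT'|=|L:T'|$. I would first try to show directly that $\bigcap_{l\neq j}K_l$ and $A$ give the same index bound modulo $T'$, using that $x$ acts trivially on $L/T'$.
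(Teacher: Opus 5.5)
Your proposal has a genuine gap. The decisive step is never carried out: you need a vector $y$ with $y_j=0$, $y_1\neq0$ and $|y^T|\geq|T/T'|$, and you flag this yourself as the main obstacle. It cannot be completed with the tools you invoke. Any $y$ with $y_j=0$ is fixed by $N_j=\bigcap_{l\neq j}K_l$, the kernel of $T$ on $\bigoplus_{l\neq j}V_l$. Since $N_j\leq K_1$ and $K_1\cap T'=1$, the inductive hypothesis for $T/N_j$ on that module gives only $|T:N_jT'|=|T/T'|/|N_j|$. So you need $N_j=1$. For $r\geq3$ this is itself an instance of the lemma, because $N_j\leq K_l\cap K_{l'}$ for any two distinct $l,l'\neq j$; the route is therefore circular.

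The hypothesis $A\neq1$ plays no role in the contradiction you aim for. A vector with proper nonempty support and $|y^T|\geq|T/T'|$ contradicts \eqref{eq:bad} outright, and that is exactly how the paper uses this lemma in Proposition~\ref{prop:qp}. Two auxiliary steps are also wrong:
\begin{itemize}
\item A subgroup $A$ of an abelian $r$-group need not be a direct factor, so $K_1=A\times B$ is not available in general.
\item The $V_1$-component cannot help with $B$, since $B\leq K_1$ centralizes $V_1$.
\end{itemize}

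The missing idea is a short algebraic argument using facts you already recorded: $K_1\cap T'=1$, $K_1\leq Z(T)$, and $x$ acts trivially on $T/T'$.
\begin{enumerate}
\item Let $k\in K_1\cap K_j$ and write $k=a^{x^{j-1}}$ with $a\in K_1$.
\item Since $[T,x]\leq T'$ by Lemma~\ref{lem:T}, $a$ and $k$ lie in the same coset of $T'$. Both lie in $K_1$ and $K_1\cap T'=1$ by \eqref{eq:Kcap}, so $a=k$. Thus $x^{j-1}$ centralizes $k$.
\item Now $k\in Z(T)$, and $\langle T,x^{j-1}\rangle=U$ because $|U:T|=r$ is prime and $x^{j-1}\notin T$. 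Hence $x$ centralizes $k$. The paper instead uses class two: $[k,x]^{j-1}=1=[k,x]^r$.
\item Therefore $k\in K_l$ for every $l$, so $k\in\C_T(V)=1$.
\end{enumerate}
No orbit counting is needed.
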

\begin{proof}
It is enough to prove $K_1\cap K_j=1$ for $j\neq1$. Let $k\in K_1\cap K_j$. Since $K_1^{x^{j-1}}=K_j$, choose $a\in K_1$ such that $a^{x^{j-1}}=k$. Lemma~\ref{lem:T} implies $aT'=kT'$. By \eqref{eq:Kcap}, $a=k$. Thus $k^{x^{j-1}}=k$.

The group $U$ has class at most two, so
\[
 [k,x]^{j-1}=1.
\]
Also $x^r\in T$ and $K_1\leq Z(T)$, whence
\[
 1=[k,x^r]=[k,x]^r.
\]
Since $(j-1,r)=1$, we obtain $[k,x]=1$. Thus $k$ is fixed by $x$ and belongs to every $K_i$, so $k\in\C_T(V)=1$.
\end{proof}

Consequently $K$ acts faithfully on every individual $V_j$ with $j\neq1$. The quotient $D/\C_D(V_j)$ acts faithfully and completely reducibly on $V_j$, so its normal $p$-core is trivial. Thus \cite[Theorem~4.3]{Yang2011} applies and gives $|K|\leq m_K(V_j)\leq|K|$. Hence $K$ has a regular orbit on each such component.

\begin{proposition}\label{prop:qp}
In a minimal counterexample, the action of $G$ on $V$ is quasi-primitive.
\end{proposition}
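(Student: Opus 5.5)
The plan is to stay in the situation set up above and build a single vector $u\in V$ whose $U$-orbit has size at least $r|T/T'|=|U/U'|$ (Lemma~\ref{lem:T}). That contradicts \eqref{eq:bad}, so a minimal counterexample cannot be non-quasi-primitive.

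\emph{Step 1: the $T$-orbit.} Let $v\in V_1$ be as in \eqref{eq:v}. Since $K=K_1$ has a regular orbit on each single $V_j$ with $j\neq1$ (Lemma~\ref{lem:intersections} and the remark after it), pick $w\in V_2$ with $\C_K(w)=1$. Put $u=v+w$, with all other components $0$. The computation \eqref{eq:vw} goes through verbatim with $W_1$ replaced by $V_2$:
\[
|u^T|\;\geq\;|T:KT'|\,|K|\;=\;|T/T'|\,|K\cap T'|\;=\;|T/T'|,
\]
using \eqref{eq:Kcap}. So it remains to gain the factor $r=|U:T|$. It suffices to show $\C_U(u)\leq T$, since then $|u^U|=r\,|T:\C_T(u)|$.

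\emph{Step 2: elements outside $T$.} Every $y\in U\setminus T$ lies outside $D$, because $UD=G$ and $T=U\cap D$. Hence $y$ permutes the components $V_1,\dots,V_r$ by a nontrivial cyclic shift $i\mapsto i+s$ with $s\not\equiv0\pmod r$. If $y$ fixes $u$, then the support $\{1,2\}$ of $u$ must be invariant under this shift. When $r\geq3$ this is impossible, so $\C_U(u)\leq T$ and we are done. (If $v=0$ were forced, i.e.\ $T$ acts trivially on $V_1$, then the support is $\{2\}$, which is even easier.)

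\emph{Step 3: the case $r=2$, the main obstacle.} Here $x$ swaps $V_1$ and $V_2$. A $y\in U\setminus T$ fixing $v+w$ must satisfy $v^y=w$ and $w^y=v$. I will use the freedom in choosing $w$ inside its regular $K$-orbit $\{w^k:k\in K\}$, which has $|K|$ elements. Each replacement $v+w^k$ still satisfies the orbit bound of Step~1, since $\C_T(v+w^k)=\C_T(v+w)^k$. So it suffices to find one $k\in K$ with $w^k\notin v^{U\setminus T}=(v^{T})^{x}$.

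To do this, compare stabilizers. If $w^k=v^{xt}$ with $t\in T$, then
\[
\C_K(w^k)\;=\;K\cap\C_T(v)^{xt}\;\supseteq\;K\cap K_2^{\,t}\;=\;K\cap K_2 .
\]
By Lemma~\ref{lem:intersections} this intersection is trivial, so it gives no immediate contradiction. I would therefore try the following instead. Suppose every $w^k$ lies in $(v^T)^x$. Then $K$ permutes a subset of $(v^T)^x$ regularly. Transporting by $x$, the group $K_2=K^x$ acts regularly on a subset of $v^T$. I would then exploit that $K$ fixes $v^T$ pointwise, because $K\leq Z(T)$ and $K$ centralizes $V_1$. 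The aim is to show that $(KK_2)$ has an orbit on $V_1\oplus V_2$ of size $|K|^2$, which would improve the bound in Step~1 by the extra factor $|K|\geq r$ whenever $K\neq1$.

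If $K=1$, then $T$ acts faithfully on $V_1$. In that case I would instead choose $u=v+v'$ with $v'\in V_2$ such that $v'\notin (v^T)^x$. This is possible unless $V_2$ consists of a single $T$-orbit up to translation by the orbit of $v^x$, and that situation is ruled out by a simple order count: $|V_2|>|T|\geq|v^T|$ whenever $V_2\neq0$ has a nonzero vector outside the orbit.

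This $r=2$ swapping issue is where I expect the real work to lie.
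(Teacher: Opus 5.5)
Your Steps~1 and~2 reproduce the paper's support argument for $r\ge 3$: the support of $u$ is a nonempty subset of $\{V_1,V_2\}$, and no nontrivial cyclic shift fixes it. You only need to make sure $u\neq0$ when $T=1$, e.g.\ by taking $w\neq0$.

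The genuine gap is Step~3 for $r=2$ and $K\neq1$. You leave it as an aim, and the tactic you propose there cannot work.
\begin{itemize}
\item Replacing $w$ by $w^k$ is vacuous. Since $k\in K\le\C_T(v)$, you have $v+w^k=(v+w)^k$, so every replacement lies in the same $U$-orbit as $v+w$. Because $T\lhd U$, $\C_U(v+w^k)=\C_U(v+w)^k$ lies in $T$ exactly when $\C_U(v+w)$ does. Equivalently, $(v^T)^x$ is $K$-invariant (as $xKx^{-1}=K_2\le T$), so either all $w^k$ lie in it or none does.
\item The fallback via $KK_2$ does not help either. $K\times K_2$ does have a regular orbit on $V_1\oplus V_2$, and in the bad case $v$ itself turns out to be $K_2$-regular. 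But that only gives $|T:\C_T(v)|\ge|K|$, which does not combine with \eqref{eq:v} to produce the missing factor $2$. With $w$ you still only get $|(v+w)^T|\ge|T/T'|$.
\end{itemize}

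What is missing is a second use of minimality, applied to the point stabilizer $C=\C_T(v)$ rather than to $K$. This replaces $w$ by a genuinely new vector of $V_2$. The paper's argument runs as follows.
\begin{itemize}
\item Suppose $z=v+w$ satisfies $z^T=z^U$, so $z^x=z^t$ for some $t\in T$. Comparing $V_1$-components gives $w^x=v^t$. Since $K_2\le Z(T)$, this makes $v$ regular for $K_2$, so $C\cap K_2=1$ and $C$ acts faithfully on $V_2$.
\item Minimality for the image of $C$ in $D/\C_D(V_2)$ gives $y\in V_2$ with $|(v+y)^T|\ge|T:C|\,|C/C'|=|T/T'|\,|T':C'|$. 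Then \eqref{eq:bad} forces $C'=T'$.
\item Hence $KT'\le C$, and \eqref{eq:v} gives $C=KT'$, which is abelian by Lemma~\ref{lem:Ki}. So $T'=1$.
\item Lemma~\ref{lem:T} then gives $[K,x]=1$, i.e.\ $K=K_2$, contradicting Lemma~\ref{lem:intersections}.
\end{itemize}

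Separately, in your $r=2$, $K=1$ subcase the order count $|V_2|>|T|$ is unjustified. A nilpotent linear group can be larger than its module, e.g.\ a Sylow $2$-subgroup of $\mathrm{GL}(2,3)$ on $\mathbb F_3^2$. It is also unnecessary: take $v'=0$, i.e.\ $u=v$. This vector is nonzero when $T>1$, so $x$ moves it out of $V_1$.
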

\begin{proof}
Suppose first that $r\geq3$. If $K\neq1$, choose $v\in V_1$ satisfying \eqref{eq:v} and choose $w\in V_2$ in a regular $K$-orbit. The centralizer calculation used in \eqref{eq:vw}, together with \eqref{eq:Kcap}, gives
\[
 |(v+w)^T|\geq |T/T'|.
\]
The support of $v+w$ is contained in $\{V_1,V_2\}$ and contains $V_2$, because $w$ lies in a regular orbit of the nontrivial group $K$. Every nontrivial power $x^j$, $1\leq j\leq r-1$, shifts this support to a different subset; if $v=0$, the singleton support is shifted through all $r$ components. Since $T$ preserves every $V_i$, vectors in the same $T$-orbit have the same support. Hence the $r$ corresponding $T$-orbits are distinct. Hence
\[
 |(v+w)^U|=r|(v+w)^T|
 \geq r|T/T'|=|U/U'|,
\]
contrary to \eqref{eq:bad}. If $K=1$ and $T>1$, induction for the faithful completely reducible action of $T$ on $V_1$ gives a vector $v\in V_1$ with $|v^T|\geq|T/T'|>1$, so $v\neq0$, and the same support argument applies. If $T=1$, then $U$ has prime order and the result is already known.

It remains to consider $r=2$. Assume $K\neq1$. Choose $v\in V_1$ as in \eqref{eq:v} and choose $w\in V_2$ in a regular $K$-orbit. Put $z=v+w$. Then
\[
 |z^T|\geq |T/T'|.
\]
If $z^T$ is not $U$-invariant, its two $T$-orbit translates are distinct, so
\[
 |z^U|=2|z^T|\geq2|T/T'|=|U/U'|,
\]
a contradiction. Hence suppose that $z^T$ is $U$-invariant. Put
\[
 C=\C_T(v).
\]
There is $t\in T$ such that $z^x=z^t$. Since $x$ interchanges $V_1$ and $V_2$, comparison of the $V_1$-components gives
\[
 w^x=v^t.
\]
Now $w^x$ is regular for $K_2=K^x$. Because $K_2\leq Z(T)$, the size of a $K_2$-orbit is constant along a $T$-orbit. Thus $v$ is also regular for $K_2$, and consequently
\begin{equation}\label{eq:Ccap}
 C\cap K_2=1.
\end{equation}
Since $K_2=\C_T(V_2)$, \eqref{eq:Ccap} says that $C$ acts faithfully on $V_2$.

Apply induction to the image of $C$ in the smaller linear quotient $D/\C_D(V_2)$. There is $y\in V_2$ such that
\[
 |y^C|\geq |C/C'|.
\]
Because $C=\C_T(v)$,
\begin{align*}
 |(v+y)^T|
 &\geq |T:C|\,|C/C'|\\
 &=\frac{|T|}{|C'|}
 =|T/T'|\,|T':C'|.
\end{align*}
If $C'<T'$, then this is at least $2|T/T'|=|U/U'|$, a contradiction. Hence
\[
 C'=T'.
\]
In particular $T'=C'\leq C$, while $K\leq C$ because $K$ fixes all of $V_1$. Thus $KT'\leq C$. But \eqref{eq:v} gives
\[
 |T:C|=|v^T|\geq |T:KT'|,
\]
and so $C=KT'$. Lemma~\ref{lem:Ki} gives $K,T'\leq Z(T)$; hence $C$ is abelian. Therefore
\[
 1=C'=T'.
\]
Lemma~\ref{lem:T} now gives $[K,x]=1$, so $K^x=K$. This contradicts Lemma~\ref{lem:intersections}, since $K\neq1$.

Finally, if $r=2$ and $K=1$, then for $T>1$ induction on the faithful completely reducible action of $T$ on $V_1$ supplies a vector $v\in V_1$ with $|v^T|\geq|T/T'|>1$, so $v\neq0$. The element $x$ moves the support from $V_1$ to $V_2$, so
\[
 |v^U|=2|v^T|\geq|U/U'|.
\]
If $T=1$, then $U$ is cyclic. This completes the proof.
\end{proof}

\section{The quasi-primitive case}
By Proposition~\ref{prop:qp}, a minimal counterexample is quasi-primitive.

\begin{lemma}\label{lem:metacyclic}
Let $P$ be a metacyclic $p$-group with $P'\leq Z(P)$. If
\[
 |K/K'|<|P/P'|\qquad(K<P),
\]
then $P$ is abelian.
\end{lemma}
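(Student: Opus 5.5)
The plan is to argue by contradiction: assume $P$ is nonabelian and exhibit a maximal subgroup $M<P$ with $|M/M'|=|P/P'|$. This contradicts the hypothesis $|K/K'|<|P/P'|$ for every $K<P$. The only consequence of metacyclicity I expect to need is that $P$ is $2$-generated. Together with $P'\leq Z(P)$, this makes commutator calculus completely explicit.

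\emph{Setup.} Write $P=\langle a,b\rangle$; a metacyclic group is generated by two elements. Because $P$ has class at most two, the commutator map is bilinear: $[a^i,b^j]=[a,b]^{ij}$. It follows that
\[
P'=\langle [a,b]\rangle
\]
is cyclic. Since $P$ is nonabelian, $c=[a,b]\neq1$. Hence $P'$ is a nontrivial cyclic $p$-group and $|\langle c^p\rangle|=|P'|/p$.

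\emph{Choice of $M$.} Since $P$ is nonabelian it is not cyclic, so $P/\Phi(P)\cong C_p\times C_p$ with basis given by the images of $a$ and $b$. Take the maximal subgroup
\[
M=\langle b\rangle\Phi(P).
\]
In class two one has $\Phi(P)=P^pP'=\langle a^p,b^p,c\rangle$; for $p=2$ this uses that squares generate $P^2$ together with $P'$. Therefore
\[
M=\langle b,a^p,c\rangle.
\]

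\emph{Computing $M'$.} The element $c$ is central, so every commutator between generators of $M$ involving $c$ is trivial. The remaining one is $[a^p,b]=c^p$, which is also central. In a group whose generator commutators are central, the derived subgroup is generated by those commutators. This gives
\[
M'=\langle c^p\rangle,\qquad |M'|=|P'|/p.
\]
Consequently
\[
|M/M'|=\frac{|P|/p}{|P'|/p}=|P/P'|,
\]
which contradicts the hypothesis applied to $K=M$. Hence $P$ is abelian.

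The step needing the most care is the claim $M'=\langle c^p\rangle$. It must be checked that no other generator pair contributes a commutator outside $\langle c^p\rangle$. It must also be checked that the description of $\Phi(P)$ and the bilinearity identities hold for $p=2$ as well; they rely only on class two, not on $p$ being odd.
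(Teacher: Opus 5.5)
Your proof is correct and is essentially the paper's argument. The paper takes the maximal subgroup $L=\langle A,b^p\rangle$, which equals $\langle a\rangle\Phi(P)$ because $[a,b]\in A$, and shows $L'=(P')^p$; you do the same with the roles of $a$ and $b$ interchanged. Because you establish the index via $\Phi(P)=\langle a^p,b^p,c\rangle$ rather than via the cyclic normal subgroup, your version needs only that $P$ is $2$-generated, which is a mild generalization.
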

\begin{proof}
Assume that $P$ is nonabelian. Choose a cyclic normal subgroup
\[
 A=\langle a\rangle\lhd P
\]
such that $P/A$ is cyclic, and choose $b\in P$ with $P/A=\langle bA\rangle$. Since $P$ has class at most two,
\[
 P'=\langle[a,b]\rangle
\]
is a nontrivial cyclic $p$-group. Put
\[
 L=\langle A,b^p\rangle.
\]
Since $P/A$ is a nontrivial cyclic $p$-group generated by $bA$, the subgroup $L/A=\langle b^pA\rangle$ has index $p$ in $P/A$. Thus $L<P$ and $|P:L|=p$. As $A$ is cyclic and $P'$ is central,
\[
 P'=\langle[a,b]\rangle,
 \qquad
 L'=\langle[a,b^p]\rangle
   =\langle[a,b]^p\rangle=(P')^p.
\]
Since $P'\neq1$ is a finite cyclic $p$-group,
\[
 |P':(P')^p|=p.
\]
Therefore
\[
 |L/L'|
 =\frac{|P|/p}{|P'|/p}
 =|P/P'|,
\]
contrary to the strict inequality for proper subgroups.
\end{proof}

If the quasi-primitive group $G$ is metacyclic, then every subgroup of $G$ is metacyclic: if $A\lhd G$ is cyclic with $G/A$ cyclic and $H\leq G$, then $H\cap A$ is cyclic and $H/(H\cap A)$ embeds in $G/A$. Thus every Sylow subgroup of our nilpotent $U$ is metacyclic. Lemmas~\ref{lem:class2} and \ref{lem:metacyclic} show that every Sylow subgroup of $U$ is abelian, and hence $U$ itself is abelian. The known abelian case \cite[Theorem~4.3]{Yang2011} gives a regular $U$-orbit on $V$, contradicting \eqref{eq:bad}.

We may therefore assume that $G$ is not metacyclic. By Berger's theorem, for finite solvable irreducible linear groups quasi-primitivity is equivalent to primitivity \cite{Berger1975}. Holt and Yang proved that a solvable primitive linear group which is not metacyclic either has a regular orbit or is one of the exceptional pairs in their Table~4.1 \cite[Theorem~1.1]{HoltYang2023}. If $G$ has a regular orbit, the same vector is regular for $U$, and again \eqref{eq:bad} is impossible.

It remains to deal with the exceptional cases in \cite[Table~4.1]{HoltYang2023}.

\begin{lemma}\label{lem:gap}
For every exceptional pair $(X,W)$ listed in \cite[Table~4.1]{HoltYang2023} and every nilpotent subgroup $H\leq X$,
\begin{equation}\label{eq:gapclaim}
 |H/H'|\leq m_H(W).
\end{equation}
\end{lemma}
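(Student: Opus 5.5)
The statement concerns a finite explicit list of pairs $(X,W)$, so the plan is to verify it case by case. I would use theory to cut down the number of subgroups that must be examined, and a computer check (for example in GAP or Magma) for what remains. The exceptional pairs in \cite[Table~4.1]{HoltYang2023} have small degree over small fields, so $|W|$ is at most a few thousand. Each $X$ can be built explicitly as a primitive solvable subgroup of $\mathrm{GL}(W)$, for instance from the library of primitive solvable groups or from the generators given by Holt and Yang. Nilpotent subgroups of $X$ are direct products of their Sylow subgroups, and the quantities $|H/H'|$ and $m_H(W)$ are computable, so everything reduces to a finite enumeration.

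To shorten the enumeration, I would first note that it suffices to check one representative from each $X$-conjugacy class of nilpotent subgroups, because both sides of \eqref{eq:gapclaim} are conjugation invariant. Second, I would use the same reduction as in Lemma~\ref{lem:strict} and Lemma~\ref{lem:class2}. If $H$ were a minimal violator, then $|K/K'|<|H/H'|$ for every $K<H$, since $m_K(W)\le m_H(W)$. So every Sylow subgroup of $H$ is ab-maximal and has class at most two. Moreover, if $p\nmid |H|$ for the characteristic $p$ of $W$, then $W$ is completely reducible for $H$ by Maschke's theorem, and \cite{KellerYang2016} already gives the inequality. Hence only nilpotent $H$ with $\Op_p(H)\ne1$ need to be checked. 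Finally, abelian $H$ are handled by \cite[Theorem~4.3]{Yang2011} whenever the relevant $p$-core vanishes; otherwise one uses the trivial bound $|H/H'|=|H|$ together with a direct search for a regular orbit.

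For each remaining candidate $H$, I would compute $|H/H'|$ and then compute $m_H(W)$ by running through the $H$-orbits on $W$. In practice it should be enough to find a single vector $w$ with $|H:\C_H(w)|\ge|H/H'|$, for example by random search, which is very fast. Only in the rare cases where no such vector turns up quickly would I compute the full orbit decomposition.

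The main obstacle I expect is bookkeeping rather than mathematics. Some entries of Table~4.1 may have $X$ of order in the tens of thousands, with many conjugacy classes of nilpotent subgroups. Constructing the correct groups faithfully from the table also matters, since any misidentified group would invalidate the check. To control this, I would check each constructed $X$ against the order and the primitivity data in the table, and I would first restrict to maximal nilpotent subgroups together with their ab-maximal subgroups of class at most two. This restriction leaves only a handful of groups per case.
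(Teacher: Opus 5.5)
Your proposal is correct and is essentially the paper's argument. The paper also verifies the inequality by a GAP computation over the conjugacy classes of subgroups of each exceptional $X$ in \cite[Table~4.1]{HoltYang2023}, comparing $|H/H'|$ with the largest $H$-orbit on $W$ for every nilpotent $H$. Your extra reductions are valid and only prune that exhaustive search: a minimal violator has ab-maximal Sylow subgroups of class at most two, has order divisible by the characteristic, and is nonabelian (note that $\Op_p(X)=1$ holds automatically here, so your ``otherwise'' branch for abelian $H$ never arises).
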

\begin{proof}
Using GAP \cite{GAP}, the author checked the conjugacy classes of subgroups in each exceptional action in \cite[Table~4.1]{HoltYang2023}. For every nilpotent subgroup $H$, the computation verifies \eqref{eq:gapclaim} by comparing $|H/H'|$ with the largest $H$-orbit on $W$.
\end{proof}

Applying Lemma~\ref{lem:gap} to the nilpotent subgroup $U$ contradicts \eqref{eq:bad}. Hence a minimal counterexample does not exist, proving Theorem~\ref{thm:solvable}.

\section{Proof of the main theorem}
We first recall a standard fact about modular representations.

\begin{lemma}\label{lem:normalp}
Let $N\lhd H$ be a normal $r$-subgroup and let $S$ be an irreducible $H$-module in characteristic $r$. Then $N$ acts trivially on $S$.
\end{lemma}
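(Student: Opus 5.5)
The plan is the standard fixed-point argument. Let $W=\C_S(N)$. Because $N\lhd H$, for $w\in W$, $h\in H$ and $n\in N$ we have $(wh)n=w(hnh^{-1})h=wh$, since $hnh^{-1}\in N$. Hence $W$ is an $H$-submodule of $S$. By irreducibility, $W$ is either $0$ or $S$. So it suffices to show $W\neq0$, after which $W=S$ and $N$ acts trivially.

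To show $W\neq 0$, first reduce to a finite $\mathbb F_r$-space. The module $S$ is irreducible over a field $k$ of characteristic $r$ and $H$ is finite, so we pick any nonzero $s\in S$. The $\mathbb F_r N$-submodule $S_0$ spanned by the orbit $sN$ is a nonzero finite-dimensional $\mathbb F_r$-space, hence a finite $r$-group, and $N$ permutes its elements. Every $N$-orbit on $S_0$ has size a power of $r$, and $\{0\}$ is an orbit of size $1$. Since $|S_0|\equiv 0\pmod r$, the number of fixed points is $\equiv 0\pmod r$. It is therefore at least $r\geq 2$, so some nonzero vector of $S_0\subseteq S$ is fixed by $N$. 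Thus $W\neq 0$.

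No step here is a serious obstacle. The only point needing care is the reduction to a finite set, so that the orbit-counting congruence applies even when the field $k$ is infinite. Restricting to the finite $\mathbb F_r$-span of a single $N$-orbit handles this. In the paper's setting $S$ is already finite, so the point is minor.
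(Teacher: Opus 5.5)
Your proof is correct and follows the same approach as the paper: the fixed-point space $\C_S(N)$ is $H$-invariant because $N\lhd H$, it is nonzero, and irreducibility then forces $\C_S(N)=S$. The only difference is that you prove the nonzero-fixed-vector fact, which the paper simply asserts, by counting $N$-orbits on the finite $\mathbb F_r$-span of $sN$; this argument is valid and also works when the field is infinite.
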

\begin{proof}
A finite $r$-group acting on a nonzero vector space in characteristic $r$ has a nonzero fixed vector. Thus $\C_S(N)\neq0$. Since $N\lhd H$, the fixed-point space $\C_S(N)$ is $H$-invariant. Irreducibility gives $\C_S(N)=S$.
\end{proof}

We use the following elementary consequence of Clifford theory.

\begin{lemma}\label{lem:homogeneous-detection}
Let $X$ be a finite group, let $N\lhd X$ be an $r$-group, and let $T$ be an $X$-module over a field of characteristic different from $r$. Let $A$ be a homogeneous component of the completely reducible $N$-module $T_N$, and let
\[
 Y=\sum_{g\in X}A^g.
\]
If $Q\leq N$ acts nontrivially on $A$, then $Y$ contains an irreducible $X$-submodule on which $Q$ acts nontrivially.
\end{lemma}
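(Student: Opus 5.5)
The plan is to take an arbitrary irreducible $X$-submodule $S$ of $Y$ and show that $Q$ already acts nontrivially on it. This needs only Clifford-type bookkeeping, not complete reducibility of $Y$ as an $X$-module.

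First, set up the $N$-module structure. Since $N$ is an $r$-group and the characteristic is not $r$, Maschke's theorem makes $T_N$ completely reducible. So $T_N$ is the direct sum of its homogeneous components, each a sum of copies of a single irreducible $N$-module. For $g\in X$, the translate $A^g$ is again a homogeneous component of $T_N$ (for the conjugate irreducible type), because $N\lhd X$. Hence the distinct translates $A^g$ form a set of homogeneous components, and their sum $Y$ is direct:
\[
 Y_N=\bigoplus_{A^g\ \text{distinct}}A^g .
\]
The subspace $Y$ is $X$-invariant by construction, so it is a nonzero $X$-submodule. Being finite-dimensional, it contains an irreducible $X$-submodule $S\neq0$.

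Next, locate $S$ inside the components. Any $N$-submodule $S$ of $Y$ is again completely reducible. Each irreducible $N$-constituent of $S$ lies in the homogeneous component of $Y_N$ of its isomorphism type. Hence
\[
 S_N=\bigoplus_{g}(S\cap A^g).
\]
Since $S\neq0$, some $S\cap A^{g_0}$ is nonzero. Applying $g_0^{-1}\in X$ and using the $X$-invariance of $S$, we get $S\cap A=(S\cap A^{g_0})^{g_0^{-1}}\neq0$.

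Finally, use homogeneity. $A$ is a direct sum of copies of one irreducible $N$-module $\lambda$. Since $Q$ acts nontrivially on $A$, it acts nontrivially on $\lambda$. The nonzero $N$-submodule $S\cap A$ is also a direct sum of copies of $\lambda$, so $Q$ acts nontrivially on $S\cap A\subseteq S$. Thus $S$ is the required irreducible $X$-submodule.

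The main obstacle is resisting the temptation to argue via complete reducibility of $Y$ over $X$, which may fail when the characteristic divides $|X|$. The point of the argument above is that any irreducible submodule works, and the one genuinely needed input is the directness of the sum of distinct homogeneous components in the completely reducible $N$-module $T_N$.
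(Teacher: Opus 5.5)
Your proposal is correct and follows essentially the same route as the paper: the sum of the distinct conjugates $A^g$ is direct, $S_N=\bigoplus_g(S\cap A^g)$ forces $S\cap A\neq0$ by $X$-invariance, and homogeneity of $A$ then shows that $Q$ acts nontrivially on $S$. The only cosmetic difference is that you move a single nonzero intersection back to $A$ with $g_0^{-1}$, whereas the paper phrases this step as a nonempty $X$-invariant subset of the transitive orbit of $A$.
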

\begin{proof}
The distinct conjugates of $A$ are distinct homogeneous $N$-components, so their sum is direct and $X$ permutes them transitively. Choose a minimal nonzero $X$-submodule $S\leq Y$. Since the characteristic is coprime to $|N|$, $S_N$ is completely reducible. Because $S_N$ is semisimple, it is the direct sum of its isotypic components. Each such isotypic component lies in the corresponding homogeneous component of $Y$, and hence equals the intersection of $S$ with that component. Therefore the set of components $B$ for which $S\cap B\neq0$ is a nonempty $X$-invariant subset of the transitive $X$-orbit of $A$; it is the full orbit. In particular $S\cap A\neq0$.

Write $A$ as a direct sum of copies of an irreducible $N$-module $L$. Since $Q$ acts nontrivially on $A$, one has $Q\not\leq\ker_N(L)$. Every irreducible constituent of the nonzero $N$-module $S\cap A$ is isomorphic to $L$. Thus $Q$ acts nontrivially on $S\cap A$, and hence on $S$.
\end{proof}

Carlip's argument \cite{Carlip1994} also works in mixed characteristic in the following form; compare \cite[Step~2 of the proof of Theorem~3.1]{Yang2011}.

\begin{lemma}\label{lem:detection}
Let $G$ be a finite group with $\F(G)=\Fp(G)$, let $V$ be a finite faithful completely reducible $G$-module, possibly of mixed characteristic, and let $U\leq G$. Put $F=\F(G)$ and $X=FU$. Then $V$ contains a completely reducible $X$-submodule $W$ such that
\[
 \C_U(W)=1.
\]
\end{lemma}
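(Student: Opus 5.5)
The plan is to take $W$ as large as possible: let $W$ be the $X$-socle of $V$, the sum of all irreducible $X$-submodules of $V$. A sum of simple modules is semisimple, and the characteristic components of $V$ are $X$-invariant, so $W$ is automatically a completely reducible $X$-submodule. Everything then reduces to showing $\C_U(W)=1$. I will in fact show the stronger statement $\C_X(W)=1$. The strategy has two steps: first show that $F$ acts faithfully on $W$, then use $\C_G(F)\leq F$ to conclude.

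\emph{Step 1: $\C_F(W)=1$.} Write $V=\bigoplus_r V_r$, where $V_r$ is the sum of the summands of characteristic $r$. Each $V_r$ is a completely reducible $G$-module, hence also an $X$-module. Fix a prime $q$ and put $N=\Op_q(G)=\Op_q(F)$, which is normal in $G$ and hence in $X$.
\begin{itemize}
\item By Lemma~\ref{lem:normalp}, applied to each irreducible $G$-summand of $V_q$, $N$ acts trivially on $V_q$.
\item By faithfulness, $N$ therefore acts faithfully on $\bigoplus_{r\neq q}V_r$.
\end{itemize}
Now let $1\neq g\in N$. There is a prime $r\neq q$ with $g$ nontrivial on $T:=V_r$. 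Since the characteristic $r$ is coprime to $|N|$, $T_N$ is completely reducible, so $g$ acts nontrivially on some homogeneous component $A$ of $T_N$. Apply Lemma~\ref{lem:homogeneous-detection} to the group $X$, the normal $q$-subgroup $N$, the module $T$ and $Q=\langle g\rangle$. It produces an irreducible $X$-submodule of $Y\leq T\leq V$ on which $g$ acts nontrivially. That submodule lies in $W$, so $g\notin\C_F(W)$. Hence every Sylow subgroup $\C_F(W)\cap\Op_q(F)$ of the nilpotent normal subgroup $\C_F(W)$ of $F$ is trivial, and therefore $\C_F(W)=1$.

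\emph{Step 2: from $F$ to $X$.} Put $C=\C_X(W)$; it is normal in $X$, and $F\lhd X$. Then $[C,F]\leq C\cap F=\C_F(W)=1$, so $C\leq\C_G(F)$. Since $\F(G)=\Fp(G)$, we have $\C_G(F)\leq F$, and hence $C\leq C\cap F=1$. In particular $\C_U(W)\leq C=1$.

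The main obstacle I expect is Step 1. One has to route the argument through a single characteristic component $T=V_r$, so that the coprimeness hypothesis of Lemma~\ref{lem:homogeneous-detection} holds. One also has to observe that the lemma only needs $N\lhd X$, not normality in $G$ or complete reducibility over $X$; this is exactly why the argument survives passing from $G$ to the subgroup $X$. The rest of the argument is formal.
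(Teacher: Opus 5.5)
Your proof is correct. It uses the same two ingredients as the paper, namely Lemma~\ref{lem:homogeneous-detection} and the self-centralizing property $\C_G(F)\leq F$, but deploys the second one differently.

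The paper works element by element. For each $x\in U$ of prime order it constructs an irreducible $X$-submodule $S_x$ not centralized by $x$, and this needs a case split:
\begin{itemize}
\item if $x\in F$, it detects $x$ itself inside $\Op_r(F)$;
\item if $x\notin F$, it uses $\C_G(F)\leq F$ to get $Q=[x,\Op_r(F)]\neq1$, detects $Q$, and then argues that $x$ cannot centralize a module on which $[x,N]$ acts nontrivially.
\end{itemize}
It then sets $W=\sum_x S_x$ and concludes with Cauchy's theorem.

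You apply Lemma~\ref{lem:homogeneous-detection} only to elements of $F$, which gives $\C_F(W)=1$ for the full $X$-socle $W$. You then invoke $\C_G(F)\leq F$ once, globally. Since $C=\C_X(W)$ and $F$ are both normal in $X$, you get $[C,F]\leq C\cap F=1$, so $C\leq \C_G(F)\leq F$ and therefore $C=1$.

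This removes the case distinction and the commutator argument. It also gives the stronger conclusion $\C_X(W)=1$: all of $X$, not just $U$, acts faithfully on $W$. Consequently, in the proof of Theorem~\ref{thm:main} you could apply Theorem~\ref{thm:solvable} directly to $(X,W,U)$ without passing to $X/\C_X(W)$.

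The paper's route stays closer to Carlip's original elementwise argument. It produces a possibly smaller $W$, assembled from one chosen irreducible summand per element of prime order of $U$, on which only $U$ is guaranteed to act faithfully.
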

\begin{proof}
Let $1\neq x\in U$ have prime order. We show that some irreducible $X$-submodule $S_x$ of $V$ is not centralized by $x$.

Suppose first that $x\in F$, and let $|x|=r$. Since $F$ is nilpotent, $x\in \Op_r(F)=\Op_r(G)$. Because $V$ is faithful and completely reducible as a $G$-module, some irreducible $G$-summand $T$ is not centralized by $x$. By Lemma~\ref{lem:normalp}, the characteristic of $T$ is not $r$. Set $N=\Op_r(F)$ and $Q=\langle x\rangle$.

Now suppose that $x\notin F$. The generalized Fitting subgroup satisfies $\C_G(\Fp(G))\leq\Fp(G)$. Since $F=\Fp(G)$,
\[
 \C_G(F)\leq F,
\]
and therefore $[x,F]\neq1$. As $F$ is the direct product of its normal Sylow subgroups, there is a prime $r$ such that
\[
 Q=[x,\Op_r(F)]\neq1.
\]
Set $N=\Op_r(F)$. Choose $1\neq q\in Q$. Faithfulness of $V$ gives an irreducible $G$-summand $T$ on which $q$, and hence $Q$, acts nontrivially. Again Lemma~\ref{lem:normalp}, applied to $\Op_r(G)$, shows that the characteristic of $T$ is not $r$.

Thus in either case $N\lhd X$ is an $r$-group, $T$ is an irreducible $G$-summand of characteristic different from $r$, and $1\neq Q\leq N$ acts nontrivially on $T$. By Maschke's theorem, $T_N$ is completely reducible. Choose an $N$-homogeneous component $A$ of $T_N$ on which $Q$ acts nontrivially, and put $Y=\sum_{g\in X}A^g$. Lemma~\ref{lem:homogeneous-detection} gives an irreducible $X$-submodule $S\leq Y$ on which $Q$ acts nontrivially.

In the first case $Q=\langle x\rangle$, so $x$ acts nontrivially on $S$. In the second case $Q=[x,N]$. If $x$ centralized $S$, then $N$ stabilizes $S$ and every commutator in $[x,N]$ would act trivially on $S$, contrary to the choice of $S$, on which $Q$ acts nontrivially. Thus $x$ also acts nontrivially on $S$ in this case. Set $S_x=S$.

Finally let
\[
 W=\sum_{\substack{x\in U\\ |x|\text{ prime}}}S_x.
\]
The sum of simple $X$-submodules is contained in the $X$-socle of $V$ and is therefore completely reducible. If $\C_U(W)>1$, then Cauchy's theorem gives a nonidentity element $x\in\C_U(W)$ of prime order. This element centralizes $S_x$, contradicting its construction. Hence $\C_U(W)=1$.
\end{proof}

\begin{remark}\label{rem:detection-nilpotence}
Nilpotence of $U$ is not used in Lemma~\ref{lem:detection}; it is needed in the proof of Theorem~\ref{thm:main} only to ensure that $\F(G)U$ is solvable. The lemma uses the self-centralizing property of $\Fp(G)$ and coprime semisimplicity for one normal Sylow subgroup of $\F(G)$ at a time.
\end{remark}

\begin{proof}[Proof of Theorem~\ref{thm:main}]
Suppose the theorem is false, and choose a counterexample $(G,V,U)$ with $|U|$ minimal. Let $F=\F(G)=\Fp(G)$.

We first show that $U$ is nilpotent. If a maximal subgroup $L<U$ does not contain $U'$, then $U=LU'$ and
\[
 |U/U'|=|L:L\cap U'|\leq |L/L'|.
\]
By minimality, applied to the proper subgroup $L$ in the same group $G$ and on the same module $V$,
\[
 |L/L'|\leq m_L(V)\leq m_U(V),
\]
a contradiction. Thus $U'\leq\Phi(U)$. Hence $U/\Phi(U)$ is abelian, and therefore nilpotent. Since a finite group is nilpotent if and only if its Frattini quotient is nilpotent, $U$ is nilpotent.

Put $X=FU$. Then
\[
 X/F\cong U/(U\cap F)
\]
is nilpotent. Since $F$ is nilpotent, $X$ is solvable. Lemma~\ref{lem:detection} gives a completely reducible $X$-submodule $W\leq V$ such that $\C_U(W)=1$.

Let $C=\C_X(W)$, and write $\overline X=X/C$ and $\overline U=UC/C$. Then $\overline X$ is solvable and acts faithfully and completely reducibly on $W$. Moreover $U\cap C=1$, so $U\cong\overline U$ and $U/U'\cong\overline U/\overline U'$. The $U$-orbits and the $\overline U$-orbits on $W$ have the same sizes. Applying Theorem~\ref{thm:solvable} to $(\overline X,W,\overline U)$ gives
\[
 |U/U'|=|\overline U/\overline U'|\leq m_{\overline U}(W)=m_U(W)\leq m_U(V),
\]
contrary to the choice of $(G,V,U)$. This proves the theorem.
\end{proof}

\begin{remark}\label{rem:2022}
No induction on proper submodules is needed. This is important because the condition $\F=\Fp$ need not pass to arbitrary quotients. Lemma~\ref{lem:detection} instead works inside the original module, and Theorem~\ref{thm:solvable} is applied only to a quotient of the solvable group $\F(G)U$.
\end{remark}

The same reduction also gives a mixed-characteristic extension of the main regular-orbit theorem of \cite{Yang2011}.

\begin{theorem}\label{thm:mixed-regular}
Let $G$ be a finite group with $\F(G)=\Fp(G)$, let $V$ be a finite faithful completely reducible $G$-module, possibly of mixed characteristic, and let $H\leq G$ be nilpotent. Assume that $H$ involves no wreath product $C_2\wr C_2$. If $V$ has a nonzero component of characteristic $2$, assume in addition that $H$ involves no wreath product $C_r\wr C_r$ for any Mersenne prime $r$. Then $H$ has a regular orbit on $V$.
\end{theorem}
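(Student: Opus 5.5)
We reduce to the solvable setting exactly as in the proof of Theorem~\ref{thm:main}. Put $F=\F(G)=\Fp(G)$ and $X=FU$ with $U=H$. Since $H$ is nilpotent, $X/F\cong H/(H\cap F)$ is nilpotent, so $X$ is solvable. Lemma~\ref{lem:detection} does not use nilpotence of the subgroup (Remark~\ref{rem:detection-nilpotence}). It therefore supplies a completely reducible $X$-submodule $W\leq V$ with $\C_H(W)=1$. Let $C=\C_X(W)$ and $\overline X=X/C$. Then $\overline X$ is a solvable group acting faithfully and completely reducibly on $W$, and $\overline H=HC/C\cong H$ because $H\cap C=1$. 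The hypotheses on sections of $H$ transfer unchanged to $\overline H$. The characteristic-$2$ hypothesis is also inherited: $W\leq V$, so if $W$ has a nonzero component of characteristic $2$, then so does $V$. Finally, a regular $\overline H$-orbit on $W$ is a regular $H$-orbit on $W$, and any $w\in W\leq V$ with $\C_H(w)=1$ is regular for $H$ on $V$. So it suffices to prove the theorem for the solvable group $\overline X$ acting on $W$.

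For the solvable case, we decompose $W=W_1\oplus\cdots\oplus W_s$ according to characteristic, with $W_i$ of characteristic $p_i$ and completely reducible for $\overline X$. We then use the direct-sum trick from Section~2. Put $N_1=\C_{\overline H}(W_1)$. First obtain a vector $w_1\in W_1$ that is regular for the image $\overline H/N_1$, inside the faithful solvable completely reducible quotient $\overline X/\C_{\overline X}(W_1)$. Next, $N_1$ acts faithfully on $W_2\oplus\cdots\oplus W_s$, so obtain a regular vector $w'$ for $N_1$ there by induction on the number of characteristics. Then $\C_{\overline H}(w_1+w')=\C_{N_1}(w')=1$. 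Each quotient inherits the section hypotheses of $H$. The quotient acting on $W_1$ has trivial $\Op_{p_1}$, since a normal $p_1$-subgroup acts trivially on every irreducible constituent by Lemma~\ref{lem:normalp}.

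This reduces everything to a single characteristic $p$, with a nilpotent subgroup of a solvable group $Y$ acting faithfully and completely reducibly on $W_i$. Here we invoke the single-characteristic theorem of \cite{Yang2011}. Its hypotheses are: $Y$ solvable (so in particular $\F=\Fp$), faithful completely reducible action, and the stated wreath-product exclusions, with the Mersenne-prime exclusion only in characteristic $2$. These are exactly what we have.

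The main obstacle is the inductive step in the decomposition by characteristic. Its hypotheses must match those of \cite[Theorem~3.1]{Yang2011}, in particular that the module is completely reducible for the ambient solvable group and not merely for the nilpotent subgroup. This holds because each $W_i$ is an $\overline X$-direct summand. We must also ensure that the Mersenne exclusion is invoked only for the characteristic-$2$ summand, which is exactly how the hypothesis of Theorem~\ref{thm:mixed-regular} is phrased.
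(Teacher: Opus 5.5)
Your proposal is correct and follows the paper's proof. You reduce via Lemma~\ref{lem:detection} to a completely reducible module $W$ for the solvable group $\F(G)H$ with $\C_H(W)=1$, split $W$ by characteristic, and apply \cite[Theorem~3.1]{Yang2011} to each piece. The only difference is the gluing: your induction on the number of characteristics via $N_1=\C_{\overline H}(W_1)$ is valid but unnecessary, since the paper takes $w_p$ regular for the full image $H_p$ on each $W_p$, giving $\C_H(\sum_p w_p)=\bigcap_p\C_H(W_p)=\C_H(W)=1$.
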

\begin{proof}
Put $F=\F(G)$ and $X=FH$. Since $X/F\cong H/(H\cap F)$ is nilpotent and $F$ is nilpotent, $X$ is solvable. By Lemma~\ref{lem:detection}, there is a completely reducible $X$-submodule $W\leq V$ such that $\C_H(W)=1$.

Decompose $W$ according to the characteristics of its irreducible $X$-summands:
\[
 W=\bigoplus_{p\in\pi} W_p,
\]
where $W_p$ is the sum of the summands in characteristic $p$, and zero terms are omitted. For $p\in\pi$, let $C_p=\C_X(W_p)$, set $X_p=X/C_p$, and let $H_p=HC_p/C_p$. Then $X_p$ is solvable and acts faithfully and completely reducibly on $W_p$. Hence
\[
 \Op_p(X_p)=1
\]
by Lemma~\ref{lem:normalp}. The group $H_p$ is nilpotent. In the terminology of \cite{Yang2011}, to ``involve'' a group means to have a section isomorphic to it. Hence a quotient of $H$ cannot involve one of the excluded wreath products unless $H$ already does. The hypotheses of \cite[Theorem~3.1]{Yang2011} therefore hold for $(X_p,W_p,H_p)$. Choose $w_p\in W_p$ in a regular $H_p$-orbit. Equivalently,
\[
 \C_H(w_p)=H\cap C_p=\C_H(W_p).
\]

Set $w=\sum_{p\in\pi}w_p\in W$. Since $W$ is the direct sum of its characteristic components,
\[
 \C_H(w)=\bigcap_{p\in\pi}\C_H(w_p)
 =\bigcap_{p\in\pi}\C_H(W_p)
 =\C_H(W)=1.
\]
Thus $w$ lies in a regular $H$-orbit on $V$.
\end{proof}



\section*{Acknowledgements}
This work was partially supported by a grant from the Simons Foundation (\#918096, to YY).

\section*{Disclosure Statement}
The author declares that he has no competing interests and no conflicts of interest.

\section*{Data Availability Statement}
Data sharing is not applicable to this article, as no data sets were generated or analysed during the current study.

\end{document}